\documentclass[reqno,11pt]{amsart}

\usepackage{amsmath, amsthm, amssymb, amsfonts, amsthm}
\usepackage{amsxtra, amscd, geometry, graphicx,epic,eepic}
\usepackage[all,cmtip]{xy}
\usepackage{mathrsfs}
\usepackage{hyperref}
\usepackage{graphicx, nicefrac}
\usepackage[dvipsnames]{xcolor}
\usepackage{enumerate}
\usepackage{bbm}

\makeatletter
\newsavebox{\@brx}
\newcommand{\llangle}[1][]{\savebox{\@brx}{\(\m@th{#1\langle}\)}%
  \mathopen{\copy\@brx\kern-0.5\wd\@brx\usebox{\@brx}}}
\newcommand{\rrangle}[1][]{\savebox{\@brx}{\(\m@th{#1\rangle}\)}%
  \mathclose{\copy\@brx\kern-0.5\wd\@brx\usebox{\@brx}}}
\makeatother

\voffset -0.3in
\setlength{\textwidth}{16truecm}
\setlength{\textheight}{22.5truecm}
\oddsidemargin 0pt
\evensidemargin 0pt

\linespread{1}

\theoremstyle{definition}
\newtheorem{theorem}{Theorem}         % 1st argument is your name for it
\newtheorem{proposition}[theorem]{Proposition}%[section] % 2nd argument is what is printed

\newtheorem{lemma}[theorem]{Lemma}%[section]

\newtheorem*{Ex1*}{Example 1}
\newtheorem*{Ex2*}{Example 2}
\newtheorem*{Ex3*}{Example 3}
\newtheorem*{Ex4*}{Example 4}
\newtheorem*{remark*}{Remark}

\newtheorem*{rmk*}{Remark}

\newcommand{\N}{\mathcal{N}}

\newcommand{\FF}{\mathfrak{F}}
\renewcommand{\N}{\mathbb N}
\newcommand{\Z}{\mathbb Z}
\newcommand{\RR}{\mathbb R}

\numberwithin{equation}{section}

\title{A note on the pair correlation of Farey fractions}

\author{Florin P. Boca}
\author{Maria Siskaki}

\address{Department of Mathematics, University of Illinois at Urbana-Champaign,
Urbana, IL 61801}

\address{E-mail: fboca@illinois.edu \quad siskaki2@illinois.edu}

\begin{document}

\date{\today}

\begin{abstract}
The pair correlations of Farey fractions with denominators $q$ satisfying $(q,m)=1$, respectively $q\equiv b \pmod{m}$ with $(b,m)=1$,
are shown to exist and are explicitly computed.
\end{abstract}

\maketitle

\section{Introduction}
The Farey fractions sequence $\FF_Q:=\{ \frac{a}{q} : 0< a\leq q\leq Q, (a,q)=1\}$  arises in several problems in mathematics.
The elements of $\FF_Q$ are well known to be uniformly distributed in $[0,1]$ as $Q\rightarrow\infty$ \cite{Mik}, with
discrepancy exactly $\frac{1}{Q}$ \cite{Dr}.
The distribution of Farey fractions is of major interest, due in part to the connection with the distribution of zeros
of the Riemann zeta function \cite{Fr,La} or of Dirichlet $L$-functions \cite{Hux}.

Although the major problems in the area remain widely open, the spacing statistics of Farey fractions are more accessible.
The gap distribution of $h$-tuples of consecutive gaps between elements of $\FF_Q$ was computed in \cite{Ha} for $h=1$ and in \cite{ABCZ}
for $h\geq 2$. More recently, the correlations of $\FF_Q$, shown to exist and explicitly computed by Zaharescu and the
first author \cite{BZ1}, turned out to play a key role in the study of the moments of eigenvalues of large sieve matrices \cite{BR}.

Motivated by Huxley's work \cite{Hux}, a number of papers investigated various features (such as discrepancy or gap distribution)
of the distribution of Farey fractions with denominators subjected to various constraints \cite{ALVZ1,ALVZ2,BH,BHS,He,Le}.

For every finite set $F\subseteq \RR$ of cardinality $N(F)$ and every interval $I$, define
\begin{equation}\label{eq1.1}
{\mathcal G}_F(I):=\frac{1}{N(F)}\, \# \bigg\{ (x,y)\in F^2 : y\neq x, \, y-x \in \frac{1}{N(F)}\, I+\Z \bigg\}.
\end{equation}
The \emph{pair correlation measure} of an increasing sequence $(F_n)$ of finite subsets of $\RR$ is defined (when it exists) by
$$
{\mathcal G} (I):= \lim\limits_n {\mathcal G}_{F_n} (I) \qquad (\text{$I$ interval}).
$$
If, in addition,
$$
G(\lambda) :=  {\mathcal G} ( [0,\lambda]) =\int_0^\lambda g(x)\, dx,
$$
then $g$ is called the \emph{pair correlation function} of $(F_n)$.

The pair correlation function of $\FF_Q$ was shown in \cite{BZ1} to be given by
\begin{equation}\label{eq1.2}
g(\lambda) =\frac{1}{\zeta(2) \lambda^2} \sum\limits_{1\leq \Delta \leq 2\zeta(2)\lambda} \varphi(\Delta) \log \frac{2\zeta (2) \lambda}{\Delta} ,\qquad \forall \lambda >0.
\end{equation}
This formula was useful in \cite{BR} to recognize the connection between the pair correlation of $\FF_Q$ and the
expression of the main term of the second moment of the large sieve matrix, provided in \cite{Ra}.

The proof of formula \eqref{eq1.2} given in \cite{BZ1} relies essentially on the Poisson summation formula.
The original motivation of this note was to re-prove \eqref{eq1.2} using some different counting arguments
that also provide effective estimates.
Our direct approach turns out to also work well in the case of two important subsets of $\FF_Q$, obtained by imposing
congruence conditions on the denominators:
$$
\begin{aligned}
\FF_Q^{(m)} & :=  \bigg\{\gamma= \frac{a}{q} \in \FF_Q : (q,m)=1\bigg\},\\
\FF_Q^{(m,b)} & :=\bigg\{ \frac{a}{q} \in \FF_Q :  q\equiv b\hspace{-6pt} \pmod{m}\bigg\},
\end{aligned}
$$
where $m\in \N$, $b\in \Z$ and $(b,m)=1$.

Set
$$ N_{Q,m;\alpha,\beta} := \# (\FF_Q^{(m)} \cap (\alpha, \beta]), \quad \text{and } \quad
    N_{Q,m} := N_{Q,m;0,1} .
$$
The following constant will appear several times in this paper:
$$
C_m :=\frac{\varphi(m)}{\zeta (2)m} \prod_{\substack{p|m \\ p \text{ prime}}}\bigg(1-\frac{1}{p^2}\bigg)^{-1}
=\frac{\varphi(m)}{m} \prod_{\substack{p \nmid m \\ p \text{ prime}}}\bigg(1-\frac{1}{p^2}\bigg).
$$
As noticed at the beginning of Section 2, for every $0\leq \alpha <\beta \leq 1$ we have
$$
\begin{aligned}
 N_{Q,m;\alpha,\beta} & = (\beta -\alpha) N_{Q,m} + O_\delta(Q^{1+\delta}) \\ &
 =\frac{(\beta-\alpha)C_m}{2}\, Q^2 +O_\delta(Q^{1+\delta}) ,\quad \forall \delta >0,
 \end{aligned}
$$
which gives an effective estimate for the uniform distribution of $\FF_Q^{(m)}$.

In the first part of Section 2 we prove
\begin{theorem}\label{thm1}
The pair correlation function $g_{(m)}$ of $\FF_Q^{(m)} $ exists and
\begin{equation*}
    g_{(m)} (\lambda)= \frac{\varphi(m)}{m} \cdot \frac{C_m}{\lambda^2} \sum\limits_{1\leq \Delta \leq \frac{2\lambda}{C_m}} \varphi(\Delta) \,
    \frac{ (\Delta,m)}{ \varphi\big((\Delta,m)\big)} \, \log  \frac{2 \lambda}{C_m \Delta}.
\end{equation*}
\end{theorem}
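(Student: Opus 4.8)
\emph{Outline of the proof.} The plan is to convert the pair correlation into a four--variable lattice count, stratified by the ``index'' of a pair. Since $\FF_Q^{(m)}\subseteq(0,1]$, two distinct elements $x=a/q$ and $y=a'/q'$ satisfy $y-x\in(-1,1)$, so for $I=[0,\lambda]$ and $N:=N_{Q,m}$ the relation $y-x\in\frac1N I+\Z$ means either $y-x\in[0,\lambda/N]$ or $y-x\in(-1,-1+\lambda/N]$; the latter forces $x$ within $\lambda/N$ of $1$ and $y$ within $\lambda/N$ of $0$, hence contributes $O(1)$ pairs. Using $N_{Q,m}=\frac{C_m}2Q^2+O_\delta(Q^{1+\delta})$ this gives
$$
\mathcal G_{\FF_Q^{(m)}}\big([0,\lambda]\big)=\frac1{N_{Q,m}}\,\#\Big\{\big(\tfrac aq,\tfrac{a'}{q'}\big)\in\big(\FF_Q^{(m)}\big)^2:\ 0<\tfrac{a'}{q'}-\tfrac aq\le\tfrac\lambda{N_{Q,m}}\Big\}+o(1),
$$
and general intervals follow by subtraction. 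Writing $\Delta:=a'q-aq'\ge1$, the spacing condition becomes $qq'\ge\Delta N_{Q,m}/\lambda$; since $qq'\le Q^2$ this is possible only for $1\le\Delta\le 2\lambda/C_m+o(1)$, which explains the summation range. So it suffices to understand, for each fixed $\Delta$, the quantity $M_\Delta(Q)$ counting quadruples $(a,q,a',q')$ with $1\le q,q'\le Q$, $(a,q)=(a',q')=1$, $(q,m)=(q',m)=1$, $0<a\le q$, $0<a'\le q'$, $a'q-aq'=\Delta$ and $qq'\ge T_\Delta:=\lceil\Delta N_{Q,m}/\lambda\rceil$, and then to compute $\lim_Q N_{Q,m}^{-1}\sum_{1\le\Delta\le 2\lambda/C_m}M_\Delta(Q)$.

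The next step evaluates $M_\Delta(Q)$. First strip the coprimality of each numerator with its denominator by M\"obius inversion: writing $a=eb$, $q=ef$, $a'=e'b'$, $q'=e'f'$ and inserting $\sum_{e\mid(a,q)}\mu(e)$, $\sum_{e'\mid(a',q')}\mu(e')$ forces $ee'\mid\Delta$, turns the equation into $b'f-bf'=\Delta':=\Delta/(ee')$, and replaces the congruence conditions by $(ef,m)=(e'f',m)=1$; since $\Delta$ is bounded only finitely many $(e,e')$ occur. For fixed $f,f'$ with $\delta:=(f,f')$ dividing $\Delta'$ the integral solutions of $b'f-bf'=\Delta'$ form the family $(b,b')=(b_0+tf/\delta,\,b_0'+tf'/\delta)$, $t\in\Z$, along which $b/f$ and $b'/f'$ both increase by $1/\delta$ while $b'/f'-b/f=\Delta'/(ff')$ stays fixed; hence the number of admissible $(b,b')$ (those with $b/f,\,b'/f'\in(0,1]$) is the number of integers $t$ in an interval of length $\delta\big(1-\Delta'/(ff')\big)$, i.e.\ $\delta\big(1-\Delta'/(ff')\big)+O(1)$. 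Summing over $f,f'$ (using that the density of pairs with $(f,f')=\delta$, $(f,m)=(f',m)=1$ inside $[1,Q/e]\times[1,Q/e']$ is $\frac1{\delta^2}\cdot\frac{\varphi(m)}{m}C_m$ when $(\delta,m)=1$ and $0$ otherwise --- this is where the prefactor $\frac{\varphi(m)}{m}C_m$ comes from), converting the sum to an integral, and substituting back $q=ef$, $q'=e'f'$, one is led to
$$
M_\Delta(Q)=\frac{\varphi(m)}{m}\,C_m\Big(\sum_{\substack{e,e',\delta\ge1\\ ee'\delta\mid\Delta,\ (ee'\delta,m)=1}}\frac{\mu(e)\mu(e')}{ee'\delta}\Big)\mathcal I_\Delta+(\text{error}),\qquad \mathcal I_\Delta:=\iint_{\substack{q,q'\le Q\\ qq'\ge T_\Delta}}\Big(1-\frac\Delta{qq'}\Big)\,dq\,dq'.
$$
Grouping the Dirichlet--type sum by $n=ee'\delta$ and using $\mu*\mu*\mathbf 1=\mu$ collapses it to $\sum_{n\mid\Delta,\ (n,m)=1}\mu(n)/n=\prod_{p\mid\Delta,\ p\nmid m}(1-\tfrac1p)=\frac{\varphi(\Delta)}{\Delta}\cdot\frac{(\Delta,m)}{\varphi((\Delta,m))}$, exactly the arithmetic weight in the statement.

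The geometric integral is elementary: with $c_\Delta:=T_\Delta/Q^2=\frac{C_m\Delta}{2\lambda}+o(1)\in(0,1]$ and $L_\Delta:=\log(1/c_\Delta)=\log\frac{2\lambda}{C_m\Delta}$, an explicit computation gives $\mathcal I_\Delta=Q^2\big(1-c_\Delta(1+L_\Delta)\big)+O(1)$. Dividing by $N_{Q,m}$, summing over $1\le\Delta\le 2\lambda/C_m$ and letting $Q\to\infty$ yields
$$
G_{(m)}(\lambda)=\frac{2\varphi(m)}{m}\sum_{1\le\Delta\le 2\lambda/C_m}\frac{\varphi(\Delta)}{\Delta}\,\frac{(\Delta,m)}{\varphi((\Delta,m))}\,\big(1-c_\Delta(1+L_\Delta)\big),
$$
and since $\frac{d}{d\lambda}\big(1-c_\Delta(1+L_\Delta)\big)=\frac{c_\Delta L_\Delta}{\lambda}$ while the $\Delta=2\lambda/C_m$ term of the sum vanishes, differentiating $G_{(m)}$ in $\lambda$ produces precisely the claimed formula for $g_{(m)}$. (Taking $m=1$ recovers \eqref{eq1.2}, a useful check; since $g_{(m)}$ is continuous the pair correlation measure exists for all intervals.)

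The main obstacle is not this algebra but controlling the cumulative error. Summing the per--pair term $O(1)$ over all $f,f'\le Q$ and all relevant $(e,e')$ only gives a bound of order $Q^2$, comparable to the main term. The point is that this error is really $\#\{t\in J_{f,f'}\}-|J_{f,f'}|$, where the endpoints of $J_{f,f'}$ are governed by a modular inverse (of $f'$ modulo $f/\delta$, essentially), so the relevant fractional parts equidistribute and the error sum enjoys square--root--type cancellation. Making this rigorous --- via bounds for incomplete Kloosterman sums or the large sieve, the same analytic input already behind the estimate $N_{Q,m}=\frac{C_m}{2}Q^2+O_\delta(Q^{1+\delta})$ quoted above --- is the crux of the argument, and it is also what makes the final estimates effective.
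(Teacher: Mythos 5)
Your reduction is sound and your main-term algebra is correct, and it takes a genuinely different route from the paper. You strip $(a,q)=1$ and $(a',q')=1$ by a double M\"obius inversion, count lattice points on the lines $b'f-bf'=\Delta'$ directly, and recover the arithmetic weight from $\mu*\mu*\mathbf{1}=\mu$ together with the density $\frac{1}{\delta^2}\cdot\frac{\varphi(m)}{m}C_m$ of pairs with $(f,f')=\delta$, $(ff',m)=1$ (that density value is correct); the resulting limit for $G_{(m)}$ is exactly \eqref{eq2.3}, and your differentiation (including the vanishing of the boundary term $\Delta=\frac{2\lambda}{C_m}$) matches the paper's last step. The paper instead eliminates the numerator $y=\frac{xu-\Delta}{v}$, detects $(y,u)=1$ by M\"obius, keeps the congruence $xu\equiv\Delta\pmod v$ with $(x,v)=1$, and counts solutions in boxes via the Proposition on $\mathcal{N}_{q,h,m}(I_1,I_2)$ (Weil--Estermann bounds, following \cite{BZ2,Est}), then averages $\frac{\varphi(v)}{v}$ using Lemma 2.1 of \cite{BG}. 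Your route trades that congruence count for elementary line counting plus a coprimality density, which is conceptually cleaner on the main term; also, your treatments of the wrap-around pairs and of $\mathcal{I}_\Delta$ are fine (the wrap-around count is $O_\lambda(1)$ because consecutive gaps in $\FF_Q$ are $\geq \frac{1}{Q^2}$, and the correction from the factor $1-\frac{\Delta}{qq'}$ is $O_{\lambda,\Delta}(1)$ on the region $qq'\geq T_\Delta$).

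As a proof, however, the proposal is incomplete precisely at the point you yourself flag as the crux, and that point is the theorem-grade content rather than a routine verification. The per-pair discrepancy $\#\{t\in J_{f,f'}\}-|J_{f,f'}|=O(1)$, summed over $\asymp Q^2$ pairs $(f,f')$, is of the same order as the main term, and you only assert square-root cancellation ``because the endpoints are governed by a modular inverse.'' To make that rigorous you must bound, uniformly in $\Delta\leq\frac{2\lambda}{C_m}$, in the divisors $e,e',\delta$ of $\Delta$, and over the hyperbola-bounded region $\{qq'\geq T_\Delta,\ q,q'\leq Q\}$ (whose boundary cuts the $f$-range at a point depending on $f'$), sawtooth-type sums in the root of $bf'\equiv-\Delta'\pmod f$, i.e.\ incomplete Kloosterman/Ramanujan-sum estimates --- which is essentially the same analytic input the paper packages as its Proposition on $\mathcal{N}_{q,h,m}$, where even the extra condition $(y,m)=1$ requires a separate argument. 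Likewise, replacing the sum over $(f,f')$ with $(f,f')=\delta$, $(ff',m)=1$ by the density-weighted integral needs an error bound uniform over the $Q$-dependent region (routine, but it must be stated). So: correct structure, correct main term and constant, but to finish you should either prove an analogue of the paper's Proposition adapted to your parametrization (equidistribution of the solution of $bf'\equiv-\Delta'\pmod f$ in short segments, with explicit error) or reorganize your count so that it reduces to the paper's three-variable congruence count; without one of these, the claimed effectivity and even the existence of the limit are not yet established.
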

In particular, the support of the function $g_{(m)}$ is the interval $[\frac{1}{2} C_m,\infty)$.

Next, we extend the equality
$
\lim\limits_{\lambda \rightarrow \infty} g_{(1)} (\lambda)=1,
$
due to R. R. Hall and presented in \cite{BZ1}, by proving that
\begin{equation}\label{eq1.3}
\lim\limits_{\lambda\rightarrow \infty} g_{(m)} (\lambda)=1,\quad \forall m\in \N .
\end{equation}

In Section 3 we investigate the pair correlation of $\FF_Q^{(m,b)}$ under the assumption
$(b,m)=1$. The cardinality
of $\FF_Q^{(m,b)}$ is given by
\begin{equation}\label{eq1.4}
N_{Q,(m,b)} = \frac{C_m}{2\varphi(m)} \, Q^2 +O_m (Q\log Q) .
\end{equation}
Furthermore, we have
$$
\# ( \FF_Q^{(m,b)} \cap (\alpha,\beta]) =(\beta-\alpha) N_{Q,(m,b)}  +O_\delta (Q^{1+\delta}), \quad \forall \delta >0,
$$
showing effectively that the elements of $\FF_Q^{(m,b)}$ are uniformly distributed.

We prove
\begin{theorem}\label{thm2}
The pair correlation function of $\FF_Q^{(m,b)} $ exists, is independent of $b$, and is given by
$$
\widetilde{g}_{(m)} (\lambda)=g_{(m)} ( \varphi (m)\lambda) .
$$
 \end{theorem}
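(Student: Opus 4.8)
The plan is to combine Theorem~\ref{thm1} with the partition $\FF_Q^{(m)}=\bigsqcup_{(b,m)=1}\FF_Q^{(m,b)}$ of $\FF_Q^{(m)}$ into its $\varphi(m)$ reduced residue classes, reducing the statement to an equidistribution property for cross-type pair counts. For reduced residues $b_1,b_2\pmod m$ and $\lambda>0$, set, using the \emph{common} normalization $N_{Q,m}$,
$$
A_Q(b_1,b_2;\lambda):=\#\Bigl\{(x,y):x\in\FF_Q^{(m,b_1)},\ y\in\FF_Q^{(m,b_2)},\ x\neq y,\ y-x\in\tfrac1{N_{Q,m}}[0,\lambda]+\Z\Bigr\}.
$$
Summing over all $\varphi(m)^2$ pairs $(b_1,b_2)$ gives $\sum_{b_1,b_2}A_Q(b_1,b_2;\lambda)=N_{Q,m}\,{\mathcal G}_{\FF_Q^{(m)}}([0,\lambda])$, which by Theorem~\ref{thm1} equals $N_{Q,m}\,G_{(m)}(\lambda)+o(Q^2)$, where $G_{(m)}(\lambda):=\int_0^\lambda g_{(m)}(x)\,dx$.

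The main step is to show that $A_Q(b_1,b_2;\lambda)$ is, to leading order, independent of the reduced pair $(b_1,b_2)$: for any two such pairs the difference of the counts is $o(Q^2)$. This is carried out along the lines of the counting argument of Section~2. One first observes that for fixed $\lambda$ and $Q$ large there is no contribution from the ``wrap-around'' $y-x\in\tfrac1{N_{Q,m}}[0,\lambda]-1$, since this forces $y+(1-x)=\tfrac{a'}{q'}+\tfrac{q-a}{q}\le\lambda/N_{Q,m}\asymp\lambda/Q^2$, impossible as the left side is $\ge 2/Q$. The remaining pairs $\tfrac aq<\tfrac{a'}{q'}$ of elements of $\FF_Q^{(m)}$ with $\tfrac{a'}{q'}-\tfrac aq\le\lambda/N_{Q,m}$ are organized by the positive integer $\Delta=a'q-aq'$; writing $n(q,q',\Delta)$ for the number of admissible $(a,a')$ with $0<a\le q$, $0<a'\le q'$, $(a,q)=(a',q')=1$ and $a'q-aq'=\Delta$, one has $n(q,q',\Delta)=0$ unless $(q,q')\mid\Delta$, and this count depends only on $q,q',\Delta$ — the congruence classes of $q,q'$ modulo $m$ play no role beyond being coprime to $m$, the equation $a'q-aq'=\Delta$ automatically forcing $a'b_1-ab_2\equiv\Delta\pmod m$ without further constraining $(a,a')$. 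Hence
$$
A_Q(b_1,b_2;\lambda)=\sum_{\Delta\ge1}\ \sum_{\substack{q\equiv b_1,\ q'\equiv b_2\ (\textup{mod }m)\\ (q,q')\mid\Delta,\ q,q'\le Q\\ qq'\ge\Delta N_{Q,m}/\lambda}}n(q,q',\Delta),
$$
the inner sum being empty (hence the $\Delta$-sum finite) once $\Delta>Q^2\lambda/N_{Q,m}=\tfrac{2\lambda}{C_m}(1+o(1))$. Separating the gcd $d=(q,q')$, the weight $n$ is built from divisor conditions and Euler's function of the free variables $q/d,q'/d$, so the standard asymptotics for sums of such weights over $q,q'$ in arithmetic progressions modulo $m$ (with $(q,m)=(q',m)=1$) give each residue pair a $\tfrac1{\varphi(m)^2}$ share of the full sum, with error $o(Q^2)$ uniform in $(b_1,b_2)$. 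Together with Theorem~\ref{thm1} this yields $\lim_Q\tfrac1{N_{Q,m}}A_Q(b_1,b_2;\lambda)=\tfrac1{\varphi(m)^2}G_{(m)}(\lambda)$ for every reduced pair, in particular for $b_1=b_2=b$; since the left side is nondecreasing in $\lambda$ and the limit is continuous, this convergence is locally uniform in $\lambda$.

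Finally one renormalizes. The window in ${\mathcal G}_{\FF_Q^{(m,b)}}([0,\lambda])$ is $[0,\lambda/N_{Q,(m,b)}]=[0,\lambda'/N_{Q,m}]$ with $\lambda':=\lambda\,N_{Q,m}/N_{Q,(m,b)}$; by \eqref{eq1.4} and the stated estimate for $N_{Q,m}$ one has $N_{Q,m}/N_{Q,(m,b)}=\varphi(m)\bigl(1+O_m(Q^{-1+\delta})\bigr)$, so $\lambda'\to\varphi(m)\lambda$ and
$$
{\mathcal G}_{\FF_Q^{(m,b)}}([0,\lambda])=\frac{1}{N_{Q,(m,b)}}\,A_Q(b,b;\lambda')=\frac{N_{Q,m}}{N_{Q,(m,b)}}\cdot\frac{A_Q(b,b;\lambda')}{N_{Q,m}}
$$
tends, as $Q\to\infty$, to $\varphi(m)\cdot\tfrac1{\varphi(m)^2}G_{(m)}(\varphi(m)\lambda)=\tfrac1{\varphi(m)}G_{(m)}(\varphi(m)\lambda)$, using the local uniformity of the previous step to absorb $\lambda'\to\varphi(m)\lambda$. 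Thus the pair correlation of $\FF_Q^{(m,b)}$ exists, is independent of $b$, and its distribution function satisfies
$$
\widetilde G_{(m)}(\lambda)=\frac1{\varphi(m)}\int_0^{\varphi(m)\lambda}g_{(m)}(x)\,dx=\int_0^\lambda g_{(m)}\bigl(\varphi(m)t\bigr)\,dt,
$$
so $\widetilde g_{(m)}(\lambda)=g_{(m)}(\varphi(m)\lambda)$.

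I expect the genuine obstacle to be the equidistribution step — proving that the sums of the weights $n(q,q',\Delta)$ over $q\equiv b_1$, $q'\equiv b_2\pmod m$ have the same main term for every reduced pair $(b_1,b_2)$, with error $o(Q^2)$ uniformly; this is precisely where one reruns the effective count of Section~2 with the extra congruence bookkeeping, and where uniformity in $b_1,b_2$ must be watched. Everything else — the vacuity of the wrap-around, the reduction to the $\Delta$-sum, and the renormalization — is routine. A more self-contained alternative is to bypass Theorem~\ref{thm1} and run the Section~2 argument directly for $\FF_Q^{(m,b)}$: the resulting formula has $\Delta$-range $1\le\Delta\le\tfrac{2\varphi(m)\lambda}{C_m}$ and summand $\varphi(\Delta)\,\tfrac{(\Delta,m)}{\varphi((\Delta,m))}\log\tfrac{2\varphi(m)\lambda}{C_m\Delta}$, which one then recognizes as $g_{(m)}(\varphi(m)\lambda)$.
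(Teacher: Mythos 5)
Your overall skeleton is sound: the partition $\FF_Q^{(m)}=\bigsqcup_{(b,m)=1}\FF_Q^{(m,b)}$, the vacuity of the wrap-around, the organization by $\Delta=a'q-aq'$, and the final renormalization $N_{Q,m}/N_{Q,(m,b)}\to\varphi(m)$ leading to $\widetilde{G}_{(m)}(\lambda)=\tfrac{1}{\varphi(m)}G_{(m)}(\varphi(m)\lambda)$ are all correct bookkeeping. But the proposal has a genuine gap exactly where you flag it: the claim that the sums of $n(q,q',\Delta)$ over $q\equiv b_1$, $q'\equiv b_2\pmod m$ have a common main term with error $o(Q^2)$ is asserted as ``standard asymptotics'' and never proved, and it is not a soft consequence of Theorem~\ref{thm1} or of multiplicativity. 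The weight $n(q,q',\Delta)$ carries the two coprimality constraints $(a,q)=(a',q')=1$ simultaneously with the linear relation $a'q-aq'=\Delta$; after M\"obius inversion this becomes a count of solutions of $xu\equiv\Delta\pmod v$ with $x\le v$, $(x,v)=1$ and $u$ ranging over a prescribed residue class modulo $m$ in a $v$-dependent interval, and controlling that uniformly requires incomplete Kloosterman/Ramanujan-type bounds (Weil--Sali\'e, as in \eqref{eq37}--\eqref{eq38}) together with an average of $\varphi(v)/v$ over $v\equiv b\pmod m$ (the paper's Lemma~\ref{L1}, imported from \cite{B}), which is genuinely different input from the coprimality-only averages used in Section~2. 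So the ``equidistribution over residue pairs'' step is not a reduction of the problem; it \emph{is} the problem, and as written your argument assumes it.

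For comparison, the paper does not pass through Theorem~\ref{thm1} at all: it reruns the Section~2 count directly for $\FF_Q^{(m,b)}$ (your closing ``alternative''), via \eqref{eq31}--\eqref{eq33}, where the substitution $u=dw$ turns $d\mid u$, $u\equiv b\pmod m$ into $(d,m)=1$ and $w\equiv\overline{\overline{d}}\,b\pmod m$, then evaluates the main term with Lemma~\ref{L1} and bounds the error with \eqref{eq37}--\eqref{eq38}, obtaining \eqref{eq39}--\eqref{eq42}; independence of $b$ falls out because the main term does not involve $b$. Your cross-class counts $A_Q(b_1,b_2;\lambda)$ would need the same machinery (indeed a slightly more general two-progression version of it), so the route you propose does not save work; if you carry out that estimate, the rest of your argument goes through and the renormalization you wrote is correct.
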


Our approach also allows us to prove that the pair correlation function of $\FF_Q^{(m)} \cap I$, respectively
$\FF_Q^{(m,b)} \cap I$, coincides with $g_{(m)}$, respectively $\widetilde{g}_{(m)}$, for every interval $I\subseteq [0,1]$.
It also gives effective asymptotic formulas in $Q$ for the quantities ${\mathcal G}_{\FF_Q} ([0,\lambda])$,
${\mathcal G}_{\FF_Q^{(m)}} ([0,\lambda])$ and ${\mathcal G}_{\FF_Q^{(m,b)}} ([0,\lambda])$.

A related result is contained in \cite[case $n=1$ of Theorem 3.2]{Xi}. However, our result involves the extra coprimality
condition $(a,q)=1$ in the definition of $\FF_Q^{(m,b)}$, which is not included in formulas (3.3.1) and (3.3.2)  of \cite{Xi}.

\begin{figure}[h]
\hspace{-2cm}
\includegraphics[scale=0.52, bb = 80 0 250 250]{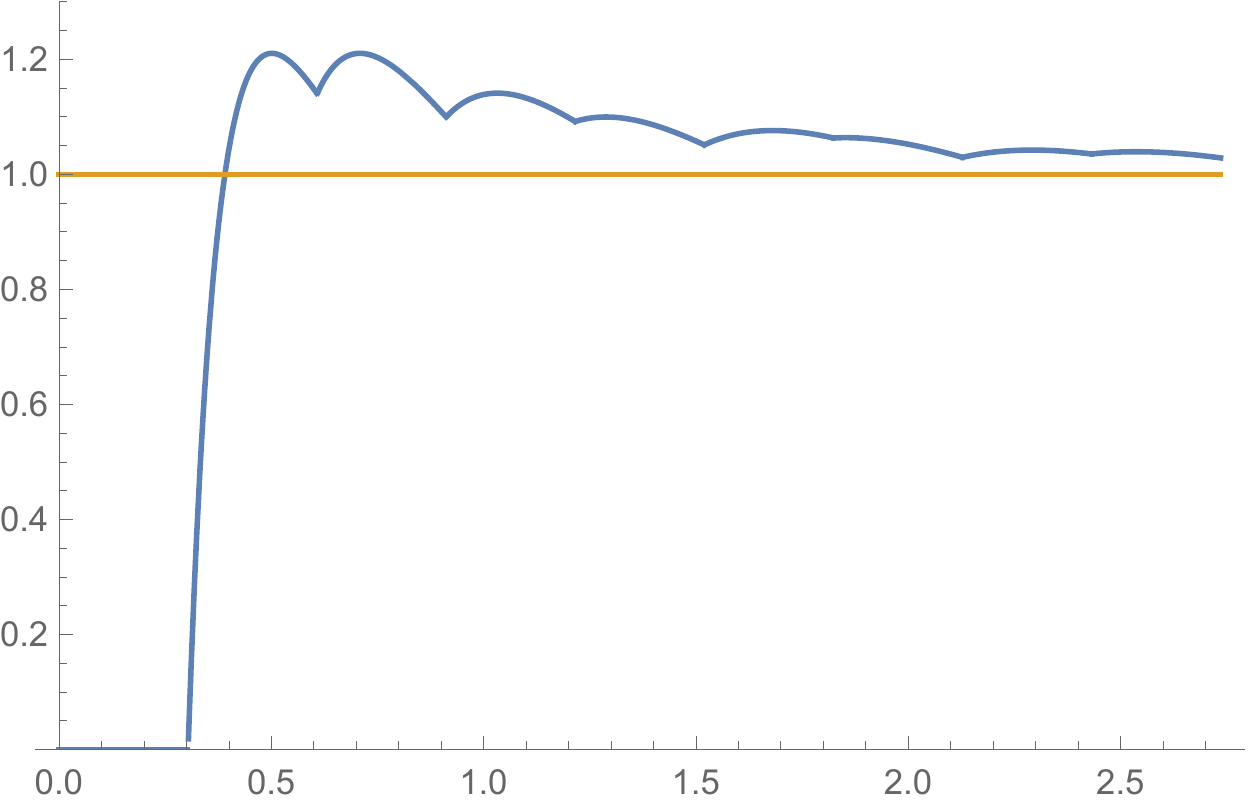}
\hspace{4cm}
\includegraphics[scale=0.52, bb = 80 0 250 250]{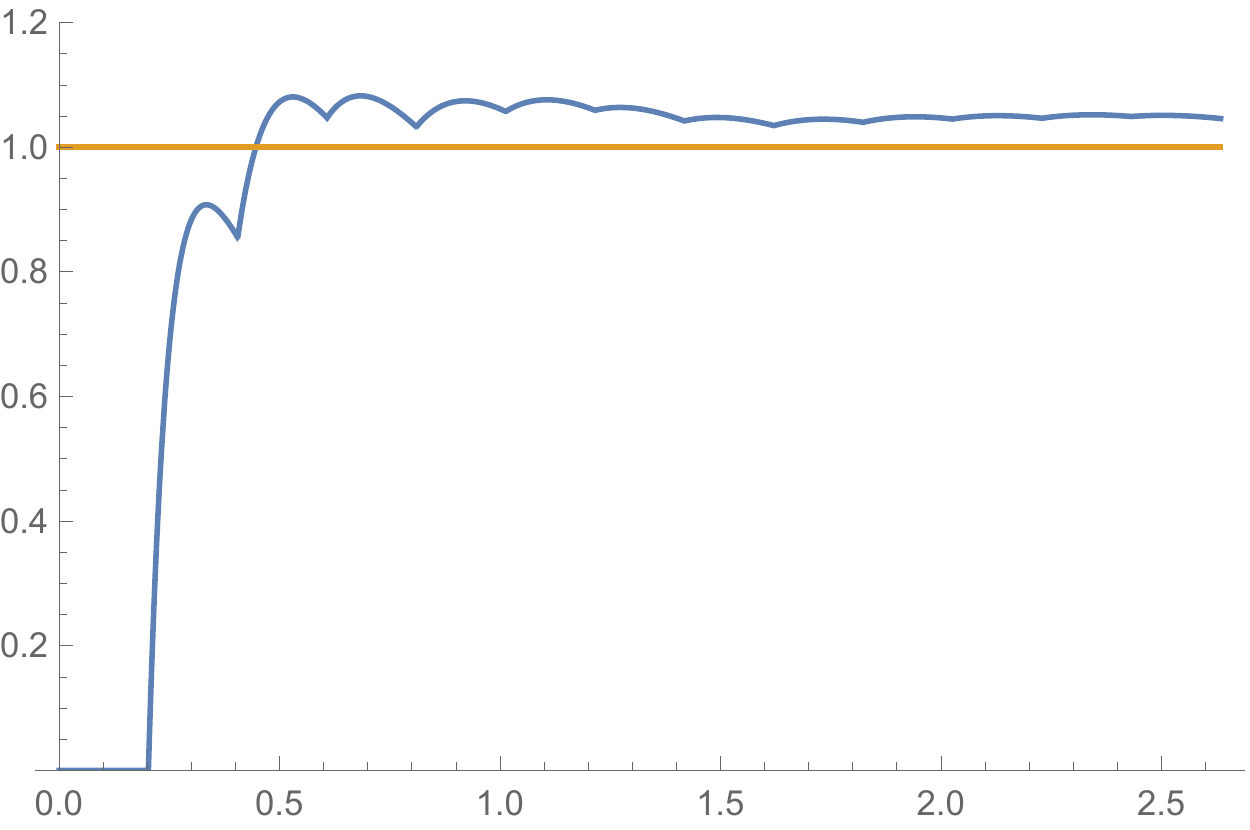}

\includegraphics[scale=0.52, bb = 80 0 250 250]{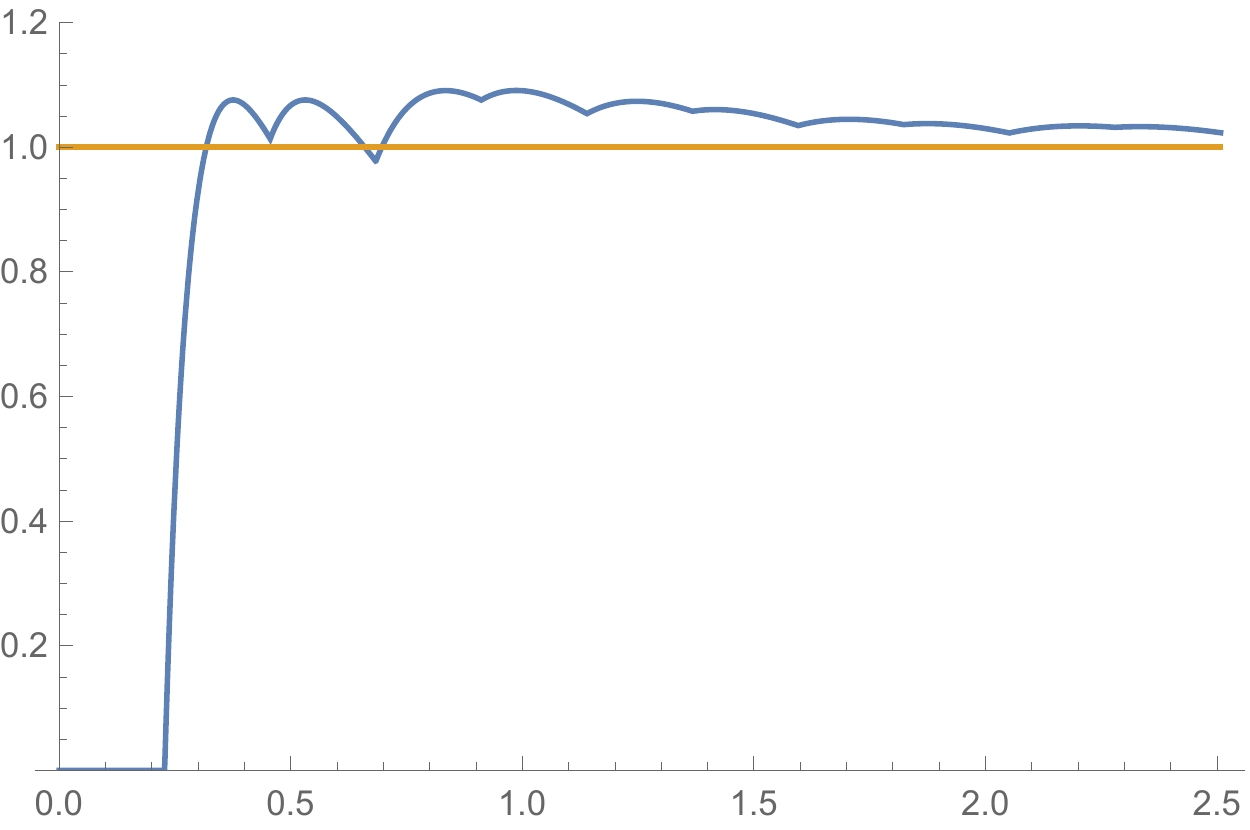}
\caption{The pair correlation functions $g_{(1)}$, $g_{(2)}$ and $g_{(3)}$}\label{Figure}
\end{figure}

\section{The pair correlation of $\FF_Q^{(m)}$}
Lemma 2.1 of \cite{BG} gives
\begin{equation*}
   N_{Q,m} = 1+ \sum\limits_{\substack{k=1 \\ (k,m)=1}}^{Q} \varphi(k) = C_m  \frac{Q^2}{2} \ + \ O(Q \log Q) .
\end{equation*}
When restricting to  $[0, \beta]$, the number of new fractions in the $k$th step is not $\varphi(k)$ anymore, but rather
\begin{equation*}
    \sum\limits_{\substack{n=1 \\ (n,k)=1}}^{\lfloor k\beta \rfloor}1= \frac{\varphi(k)}{k} \lfloor k\beta \rfloor +O_\delta(k^{\delta}),
\end{equation*}
where one can use, for example, \cite[Lemma A.1]{BZ2}. Thus
\begin{align*}
    N_{Q,m;0,\beta} &= \beta N_{Q,m} + O_\delta(Q^{1+\delta}), \\
N_{Q,m;\alpha,\beta} & = N_{Q,m;0,\beta} - N_{Q,m;0,\alpha} = (\beta -\alpha) N_{Q,m} + O_\delta(Q^{1+\delta}).
\end{align*}
Set
\begin{align*}
  H_{Q,m;\beta} (\lambda) & := \#\bigg\{(\gamma,\gamma'): \gamma,\gamma^\prime \in \FF_Q^{(m)}, 0<\gamma'-\gamma \leq \frac{\lambda}{Q^2}, \ \gamma' \leq \beta \bigg\}, \\
  \overline{H}_{Q,m;\alpha} (\lambda) & := \#\bigg\{(\gamma,\gamma'):\gamma,\gamma^\prime  \in \FF_Q^{(m)}, 0<\gamma'-\gamma \leq \frac{\lambda}{Q^2}, \ \gamma \leq \alpha\bigg\}.
  \end{align*}
   If the limit exists, set
 $$
  G_{(m;\alpha,\beta)} (\lambda)  := \lim_{Q \rightarrow \infty} \frac{1}{N_{Q,m;\alpha,\beta}  (Q)}\,\#\left\{(\gamma,\gamma')  :
 \begin{matrix}  \gamma,\gamma^\prime \in \FF_Q^{(m)} \\ 0< \gamma'-\gamma \leq \frac{\lambda}{N_{Q,m;\alpha,\beta}} \end{matrix}\right\}.
$$

Then
\begin{align*}
  H_{Q,m; \beta}(\lambda)& = \sum\limits_{1 \leq \Delta \leq \lambda }  \# S_{Q,m;\beta}(\Delta, \lambda)
  = \sum\limits_{1 \leq \Delta \leq \lambda }  \# \widetilde{S}_{Q,m;\beta}(\Delta, \lambda) ,\\
   \overline{H}_{Q,m; \alpha}(\lambda)& = \sum\limits_{1 \leq \Delta \leq \lambda }  \# \overline{S}_{Q,m;\alpha}(\Delta, \lambda),
\end{align*}
where we used the variables $x=a', v=q', u=q, y=a$ to get
\begin{align*}
   & S_{Q,m;\beta}(\Delta, \lambda)  =  \left\{(u,v,x,y) \in \mathbb{N}^4 : \begin{matrix}
   xu-yv=\Delta, \ \substack{x \leq \beta v, \ v \leq Q\\ y \leq u \leq Q}, \ \substack{ (x,v)=1 \\ (y,u)=1}
   \\ \frac{\Delta}{uv} \leq \frac{\lambda}{Q^2}, \ \substack{(u,m)=1 \\(v,m)=1}\end{matrix} \right\},\\
   & \widetilde{S}_{Q,m;\beta} (\Delta,\lambda) = \left\{ (u,v,x) \in \mathbb{N}^3  : \begin{array}{c}
   \frac{\Delta Q}{\lambda} \leq v \leq Q, \ \frac{\Delta Q^2}{\lambda v}\leq u \leq Q, \ \substack{x \leq \beta v \\ (x,v)=1} \\
    xu \equiv \Delta \mod v ,\
    (\frac{xu-\Delta}{v},u)=1, \ \substack{(u,m)=1 \\(v,m)=1} \end{array}\right\},\\
    & \overline{S}_{Q,m;\alpha}(\Delta, \lambda)  =  \left\{(u,v,x,y) \in \mathbb{N}^4 :
    \begin{matrix} xu-yv=\Delta, \ \substack{x \leq v \leq Q\\ y \leq \alpha u , \ u \leq Q}, \ \substack{ (x,v)=1 \\ (y,u)=1} \\
    \frac{\Delta}{uv} \leq \frac{\lambda}{Q^2}, \ \substack{(u,m)=1 \\(v,m)=1} \end{matrix}\right\}.
\end{align*}
Observe that $xu-yv=\Delta \Leftrightarrow \frac{x}{v}= \frac{y}{u}+ \frac{\Delta}{uv}$ implies
$$
\# S_{Q,m;\beta}(\Delta, \lambda) \leq
\# \overline{S}_{Q,m;\beta}(\Delta, \lambda) \leq \# S_{Q,m;\beta+\frac{\lambda}{Q^2}} (\Delta,\lambda),
$$
so that $\overline{H}_{Q,m;\beta} (\lambda)$
is asymptotically the same as  $H_{Q,m;\beta} (\lambda)$
as $Q\rightarrow \infty$. Thus it suffices to estimate $\# \widetilde{S}_{Q,m;\beta}(\Delta, \lambda)$ as follows:
\begin{align}\label{bar S extimate 1}
  \nonumber  \# \widetilde{S}_{Q,m;\beta}(\Delta, \lambda) & = \sum\limits_{\substack{\frac{\Delta Q}{\lambda} \leq v \leq Q \\ (v,m)=1}}
  \sum\limits_{\substack{\frac{\Delta Q^2}{\lambda v}\leq u \leq Q, \ (u,m)=1 \\ x \leq \beta v, \ (x,v)=1 \\ xu \equiv \Delta \mod v \\ (\frac{xu-\Delta}{v},u)=1}}1 \\
  \nonumber  &= \sum\limits_{\substack{\frac{\Delta Q}{\lambda} \leq v \leq Q \\ (v,m)=1}}
  \sum\limits_{\substack{\frac{\Delta Q^2}{\lambda v}\leq u \leq Q, \ (u,m)=1 \\ x \leq \beta v, \ (x,v)=1 \\ xu \equiv \Delta \mod v}}
  \sum\limits_{\substack{d\mid\frac{xu-\Delta}{v}\\ d\mid u}} \mu(d) \\
    & \stackrel{u=dw}{=} \sum\limits_{\substack{d\mid \Delta \\ (d,m)=1}} \mu(d) \sum\limits_{\substack{\frac{\Delta Q}{\lambda} \leq v \leq Q \\ (v,m)=1}}
    \ \ \sum\limits_{\substack{\frac{\Delta Q^2}{\lambda d v }\leq w \leq \frac{Q}{d},\ (w,m)=1\\ x \leq \beta v, \ (x,v)=1 \\ xw \equiv \frac{\Delta}{d} \mod v}}1.
\end{align}

To estimate the innermost sum on the right hand side in \eqref{bar S extimate 1},
 we need to check that \cite[Proposition A.3]{BZ2} carries over with the additional condition $(q,m)=1$. Set
\begin{equation*}
    \mathcal{N}_{q,h,m}(I_1, I_2) = \{(x,y) \in I_1 \times I_2 : (x,q)=1, \ (y,m)=1, \ xy \equiv h \mod q\}.
\end{equation*}

\begin{proposition}
Assuming $(q,m)=1$, for any intervals $I_1,I_2$ and any integer $h$ we have
\begin{equation*}
    \# \mathcal{N}_{q,h,m}(I_1, I_2) = \frac{\varphi(q)}{q^2}\cdot  \frac{\varphi(m)}{m} \, |I_1||I_2|
     + \ O_{\delta, m} \bigg( q^{1/2+\delta} (h,q)^{1/2}
    \Big( 1+\frac{\lvert I_1\rvert}{q}\Big) \Big( 1+\frac{\lvert I_2 \rvert }{q}\Big)\bigg).
\end{equation*}
\end{proposition}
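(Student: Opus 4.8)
\emph{Proof sketch.} The plan is to reduce to \cite[Proposition A.3]{BZ2}, which is the case $m=1$ (no coprimality constraint on the second variable), by detecting the condition $(y,m)=1$ with Möbius inversion. Writing $\mathbbm{1}_{(y,m)=1}=\sum_{e\mid (y,m)}\mu(e)$ and interchanging the order of summation gives
$$
\#\mathcal N_{q,h,m}(I_1,I_2)=\sum_{e\mid m}\mu(e)\,\#\big\{(x,y)\in I_1\times(I_2\cap e\Z):(x,q)=1,\ xy\equiv h\pmod{q}\big\}.
$$
In the $e$-th inner term substitute $y=ez$, so that $z$ runs over the interval $\tfrac1e I_2$, of length $|I_2|/e$, and $y\in I_2\cap e\Z$ corresponds bijectively to $z\in\tfrac1e I_2\cap\Z$. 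Since $e\mid m$ and $(q,m)=1$ we have $(e,q)=1$, so $e$ is invertible modulo $q$; writing $\bar e$ for its inverse, the congruence $xy\equiv h\pmod q$ becomes $xz\equiv h\bar e\pmod q$, and $(h\bar e,q)=(h,q)$.

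Consequently the $e$-th inner term equals $\#\mathcal N_{q,h\bar e}\big(I_1,\tfrac1e I_2\big)$ in the notation of \cite[Proposition A.3]{BZ2}, and that result yields
$$
\#\mathcal N_{q,h\bar e}\Big(I_1,\tfrac1e I_2\Big)=\frac{\varphi(q)}{q^2}\,|I_1|\,\frac{|I_2|}{e}+O_\delta\!\bigg(q^{1/2+\delta}(h,q)^{1/2}\Big(1+\frac{|I_1|}{q}\Big)\Big(1+\frac{|I_2|}{eq}\Big)\bigg).
$$
Summing over $e\mid m$, the main terms combine through the standard identity $\sum_{e\mid m}\mu(e)/e=\prod_{p\mid m}(1-1/p)=\varphi(m)/m$ into $\frac{\varphi(q)}{q^2}\cdot\frac{\varphi(m)}{m}\,|I_1||I_2|$. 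For the error, there are only finitely many terms, in number depending solely on $m$, and each of them is bounded — after the trivial estimate $|I_2|/(eq)\le|I_2|/q$ — by the displayed error term; absorbing the number of divisors of $m$ into the implied constant, which is already allowed to depend on $m$, gives the asserted bound.

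The argument is essentially bookkeeping once \cite[Proposition A.3]{BZ2} is in hand; the only delicate point is the interaction of the substitution $y=ez$ with the modulus $q$, namely checking that $(e,q)=1$ — this is exactly where the hypothesis $(q,m)=1$ is used — and that replacing $h$ by $h\bar e$ leaves $(h,q)$ unchanged, so that the error term of \cite[Proposition A.3]{BZ2} is not inflated. I would also note that the intervals $\tfrac1e I_2$ need not have integer endpoints, which is harmless since \cite[Proposition A.3]{BZ2} is stated for arbitrary intervals. Should one wish to bypass that reference, the same estimate follows directly by expanding the congruence $xz\equiv h\bar e\pmod q$ into additive characters mod $q$, isolating the contribution of the trivial character as the main term, and bounding the remaining Kloosterman-type sums by the Weil bound together with the usual completion of the intervals $I_1$ and $I_2$; quoting the existing result is cleaner.
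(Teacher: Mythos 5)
Your proposal is correct, but it takes a genuinely different route from the paper. You detect the condition $(y,m)=1$ by M\"obius inversion at the level of the counting function itself, substitute $y=ez$, and then invoke \cite[Proposition A.3]{BZ2} as a black box for each dilated interval $\tfrac1e I_2$ and shifted residue $h\bar e$; the key checks — that $e\mid m$ and $(q,m)=1$ force $(e,q)=1$, and that $(h\bar e,q)=(h,q)$ so the error term is not inflated — are exactly right, and the main terms recombine via $\sum_{e\mid m}\mu(e)/e=\varphi(m)/m$ while the $\tau(m)$ error terms are absorbed into the $m$-dependent constant. The paper instead reproves the estimate from scratch along the lines of \cite{BZ2}: it expands the congruence in additive characters mod $q$, extracts the main term $M$ by two applications of \cite[Lemma A1]{BZ2}, and handles the error $E$ by performing the M\"obius inversion over $d\mid m$ \emph{inside} the exponential sum over $y$, bounding the incomplete Kloosterman-type sums $S_{I_1}(0,-hk;q)$ via the Weil--Sali\'e/Estermann estimate, and completing the geometric series with the $\|\cdot\|$ inequality; the hypothesis $(q,m)=1$ enters there to rule out the degenerate case $q\mid kd$. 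Your reduction is shorter and cleaner, at the cost of leaning on the precise form of the cited result (in particular that it allows arbitrary intervals, including the non-integer-endpoint dilates $\tfrac1e I_2$ — which its $(1+|I_i|/q)$ error factors do accommodate); the paper's self-contained rerun of the character-sum argument makes the role of $(q,m)=1$ explicit and produces intermediate bounds such as \eqref{eq2.2} that are reused later (see the footnote justifying \eqref{eq37} in Section 3), which your black-box reduction would not supply.
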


\begin{proof}
For $x$ such that $(x,q)=1$, let $\overline{x}$ denote the unique inverse of $x \hspace{-4pt}\mod q$. We have that
\begin{align*}
    \# \mathcal{N}_{q,h,m}(I_1, I_2) &= \sum\limits_{\substack{(x,y) \in I_1 \times I_2\\ (x,q)=  (y,m)=1\\ xy \equiv h \mod q}}1
    = \frac{1}{q} \sum\limits_{\substack{(x,y) \in I_1 \times I_2\\ (x,q)=  (y,m)=1}} \sum\limits_{k=0}^{q-1} e\bigg(  \frac{k(y-\overline{x}h)}{q} \bigg).
\end{align*}
We distinguish the cases $k=0$ and $k >0$:
\begin{align*}
    M& := \frac{1}{q} \sum\limits_{\substack{x \in I_1 \\ (x,q)=1}}\sum\limits_{\substack{y \in I_2\\ (y,m)=1}}1,  \\
    E& := \frac{1}{q}\sum\limits_{\substack{y \in I_2\\ (y,m)=1}} \sum\limits_{k=1}^{q-1} e\bigg( \frac{ky}{q}\bigg)
    \sum\limits_{\substack{x \in I_1 \\ (x,q)=1}} e\bigg(-\frac{\overline{x}hk}{q} \bigg).
\end{align*}
For the term $M$, %observe that due to $q$-periodicity we can assume without loss of generality that $I_1 \subseteq [0,q)$, since the relation $(x,q)=1 $ implies that $(x, \pm kq,q)=1$, for any $k \in \mathbb{Z}$. We can furthermore
two successive applications of \cite[Lemma A1]{BZ2} give
\begin{align*}
    M=& \frac{1}{q}\sum\limits_{\substack{x \in I_1 \\ (x,q)=1}} \bigg( \frac{\varphi(m)}{m}\, |I_2| + O_\delta (m^\delta) \bigg ) \\
   =& \frac{1}{q} \cdot\frac{\varphi(m)}{m}\, |I_2| \sum\limits_{\substack{x \in I_1 \\ (x,q)=1}} 1 +   O_\delta \bigg(m^\delta\frac{|I_1|+1}{q}  \bigg) \\ = &
   \frac{1}{q} \cdot\frac{\varphi(m)}{m}\, |I_2| \bigg(\frac{\varphi(q)}{q}|I_1|+ O_\delta (q^\delta) \bigg) + O_\delta \bigg(m^\delta\frac{|I_1|+1}{q}  \bigg)\\
  =& \frac{\varphi(q)}{q^2}\cdot \frac{\varphi(m)}{m} \, |I_1||I_2| + O_{\delta,m} \bigg(\frac{|I_1|+|I_2|+1}{q^{1-\delta}}\bigg).
\end{align*}

Now, following the notation and approach from \cite{BZ2}, which makes essential use of the Weil-Sali\' e type estimates derived in \cite[(5)]{Est}, we have
\begin{align*}
    E& = \frac{1}{q} \sum\limits_{\substack{y \in I_2\\ (y,m)=1}} \sum\limits_{k=1}^{q-1} e\bigg( \frac{ky}{q}\bigg) S_{I_1}(0,-hk;q)
   \\
    & = \frac{1}{q}\sum\limits_{d\mid m} \mu(d)\sum\limits_{k=1}^{q-1}  S_{I_1}(0,-hk;q)  \sum\limits_{\substack{y \in I_2\\ d\mid y}} e\bigg( \frac{ky}{q}\bigg)
   \\ &  \stackrel{y=d\ell}{=}  \frac{1}{q}\sum\limits_{d\mid m} \mu(d)\sum\limits_{k=1}^{q-1}  S_{I_1}(0,-hk;q)  \sum\limits_{\ell\in \frac{1}{d} I_2} e\bigg( \frac{kd\ell}{q}\bigg).
\end{align*}

We distinguish the cases $q \mid kd$ and $q \nmid kd$. The former cannot occur because
$d\mid m$, $(q,m)=1$ and $q>k$. For the latter, we use \cite[Lemma A2]{BZ2} to estimate $S_{I_1}(0,-hk;q)$. Here, we do not necessarily have $I_1 \subseteq [0,q)$,
so we get the extra factor in the final formula:
\begin{equation}\label{eq2.2}%\label{S_I estimate}
\begin{aligned}
    S_{I_1}(0,-hk;q) & \ll (hk,q)^{1/2} q^{1/2 + \delta} \bigg( 1+ \frac{|I_1|}{q}\bigg) \\
    & \leq (h,q)^{1/2}(k,q)^{1/2}q^{1/2+\delta} \bigg( 1+ \frac{|I_1|}{q}\bigg).
    \end{aligned}
\end{equation}
Since
$S_{[\ell q,(\ell+1)q)} (0,-hk;q)$ coincides with the Ramanujan sum
$c_q (-hk) =\sum\limits_{d\mid (hk,q)} \mu (\frac{q}{d})d \ll_\delta (hk,q)^{1+\delta}$, the first estimate in
\eqref{eq2.2} can be improved to
$$
S_{I_1} (0,-hk;q) \ll_\delta (hk,q)^{1/2} q^{1/2+\delta} +(hk,q)^{1+\delta} \frac{| I_1 |}{q}.
$$

Combine \eqref{eq2.2} with the geometric sum and the inequality $|\sin \pi x| \geq 2 \|x\|$ to get
\begin{align*}
    E & =  \frac{1}{q}\sum\limits_{d\mid m} \mu(d)\sum\limits_{\substack{k=1 \\ q\nmid kd}}^{q-1}  S_{I_1}(0,-hk;q)
    \sum\limits_{\ell\in \frac{1}{d} I_2} e\bigg( \frac{kd\ell}{q}\bigg) \\
    & \ll \frac{(h,q)^{1/2}q^{1/2+\delta}}{q} \bigg( 1+ \frac{|I_1|}{q}\bigg) \sum\limits_{d \mid m}
    \sum\limits_{\substack{k=1\\ q \nmid kd}}^{q-1} \frac{(k,q)^{1/2}}{\big\| \frac{kd}{q}\big\|}\\
    & \leq \frac{(h,q)^{1/2}q^{1/2+\delta}}{q}  \bigg( 1+ \frac{|I_1|}{q}\bigg)
    \sum\limits_{d \mid m} \sum\limits_{\substack{k=1\\ q \nmid kd}}^{q-1} \frac{(kd,q)^{1/2}}{\big\| \frac{kd}{q}\big\|}.
\end{align*}

Since
$$\{ kd : 1\leq k< q, q\nmid kd\} \subseteq \{ n= cq+r : 1\leq r <q, 0 \leq c <d\},$$
we further get
\begin{align*}
    E & \ll \frac{(h,q)^{1/2+\delta}q^{1/2}}{q} \bigg( 1+ \frac{|I_1|}{q}\bigg) \sum\limits_{d \mid m}
    \sum\limits_{c=0}^{d-1}\sum\limits_{r=1}^{q-1} \frac{(cq+r,q)^{1/2}}{\big\|c +\frac{r}{q} \big\|}\\
    &= \frac{(h,q)^{1/2}q^{1/2+\delta}}{q} \bigg( 1+ \frac{|I_1|}{q}\bigg) \sum\limits_{d \mid m} d \sum\limits_{r=1}^{q-1} \frac{(r,q)^{1/2}}{\big\|\frac{r}{q} \big\|} \\
   & \stackrel{\substack{\ell = (r,q) \\ r = \ell s}}{\leq} \frac{(h,q)^{1/2}q^{1/2+\delta}}{q}  \bigg( 1+ \frac{|I_1|}{q}\bigg)\sum\limits_{d \mid m} d \sum\limits_{\ell \mid q}
   \sum\limits_{s \leq \frac{q}{2\ell}} \frac{2\ell^{1/2}}{\frac{\ell s }{q}} \\
   & \ll_m (h,q)^{1/2} q^{1/2+3\delta} \bigg( 1+ \frac{|I_1|}{q}\bigg) .
\end{align*}

\end{proof}

Thus,
under the correspondence $ v \leftrightarrow q, x \leftrightarrow x, w \leftrightarrow y, \frac{\Delta}{d} \leftrightarrow h$, relation (\ref{bar S extimate 1}) becomes
\begin{align*}
  &   \# \widetilde{S}_{Q,m; \beta}(\Delta, \lambda) = \sum\limits_{\substack{d\mid\Delta \\ (d,m)=1}} \mu(d)
  \sum\limits_{\substack{\frac{\Delta Q}{\lambda} \leq v \leq Q \\ (v,m)=1}}
     \bigg(\mathcal{N}_{v,\frac{\Delta}{d},m} \Big([0, \beta v], \big[0,\tfrac{Q}{d}\big]\Big)
      -\mathcal{N}_{v,\frac{\Delta}{d},m}\Big([0, \beta v],\big[0, \tfrac{\Delta Q^2}{\lambda d v}\big]\Big) \bigg) \\
    & \qquad = \sum\limits_{\substack{d\mid\Delta \\ (d,m)=1}} \mu(d) \sum\limits_{\substack{\frac{\Delta Q}{\lambda} \leq v \leq Q \\ (v,m)=1}}
     \Bigg( \frac{\varphi(v)}{v^2}\cdot \frac{\varphi(m)}{m} \, \beta v  \bigg( \frac{Q}{d}-  \frac{\Delta Q^2}{\lambda v d}\bigg)
      + O_{\delta, m,\Delta}\bigg(v^{1/2+\delta}\Big(\frac{Q}{v}+\frac{Q^2}{\lambda v^2}\Big)\bigg) \Bigg) \\
     & \qquad = \beta Q \, \frac{\varphi(m)}{m} \sum\limits_{\substack{d\mid\Delta \\ (d,m)=1}} \frac{\mu(d)}{d} \sum\limits_{\substack{\frac{\Delta Q}{\lambda} \leq v \leq Q \\ (v,m)=1}}
     \frac{\varphi(v)}{v} \bigg(1-\frac{\Delta Q}{\lambda v}\bigg) \ + \ O_{\delta,m, \Delta}(Q^{3/2+\delta}).
\end{align*}
Note that there is no dependence of $\lambda$ in the error term because the inequality $\gamma^\prime -\gamma \geq \frac{1}{Q^2}$,
$\forall \gamma,\gamma^\prime \in \FF_Q$, $\gamma < \gamma^\prime$ allows for $\lambda \geq 1$.

A simple calculation shows that the function
$$
K_m (n)= \sum\limits_{\substack{d\mid n \\ (d,m)=1}} \frac{\mu(d)}{d}
$$
is multiplicative for every $m$, and that at prime powers we have
\begin{equation*}
    K_m (p^\ell)= \sum\limits_{\substack{d\mid p^\ell \\ (d,m)=1}} \frac{\mu(d)}{d}
    = \begin{cases} 1, & \text{ if } p\mid m \\ 1-\frac{1}{p} = \frac{\varphi(p^\ell)}{p^\ell}, & \text{ if } p\nmid m \end{cases}.
\end{equation*}
Therefore
\begin{equation*}
    K_m (\Delta) = \prod_{\substack{p\mid\Delta \\ p \nmid m}} \bigg(1-\frac{1}{p}\bigg)
    = \frac{\prod\limits_{\substack{p\mid\Delta}} \big(1-\frac{1}{p}\big)}{\prod\limits_{\substack{p\mid\Delta \\ p  |m}} \big(1-\frac{1}{p}\big)}
    = \frac{\prod\limits_{\substack{p\mid\Delta}} \big(1-\frac{1}{p}\big)}{\prod\limits_{\substack{p\mid (\Delta,m)}} \big(1-\frac{1}{p}\big)}
    = \frac{\frac{\varphi(\Delta)}{\Delta}}{\frac{\varphi\big((\Delta,m)\big)}{(\Delta,m)}},
\end{equation*}
and consequently
\begin{equation*}
    \# \widetilde{S}_{Q,m;\beta}(\Delta, \lambda) = \beta Q \, \frac{\varphi(m)}{m}\cdot \frac{\varphi(\Delta)}{\Delta}
    \cdot\frac{ (\Delta,m)}{ \varphi\big((\Delta,m)\big)}
    \sum\limits_{\substack{\frac{\Delta Q}{\lambda} \leq v \leq Q \\ (v,m)=1}}  \frac{\varphi(v)}{v} \bigg(1-\frac{\Delta Q}{\lambda v}\bigg)
    + \ O_{\delta,m, \Delta}(Q^{3/2+\delta}).
\end{equation*}
Using \cite[Lemma 2.1]{BG} twice, we get
\begin{equation*}
    \# \widetilde{S}_{Q,m;\beta}(\Delta, \lambda) =\beta  C_m Q^2 \, \frac{\varphi(m)}{m} \cdot\frac{\varphi(\Delta)}{\Delta}
    \cdot\frac{ (\Delta,m)}{ \varphi\big((\Delta,m)\big)}
    \bigg( 1-\frac{\Delta}{\lambda} - \frac{\Delta}{\lambda} \log \frac{\lambda}{\Delta} \bigg)  + \ O_{\delta,m, \Delta}(Q^{3/2+\delta}).
\end{equation*}

Thus, for every $K\geq 1$ we have, uniformly in $\lambda \in [1,K]$,
\begin{equation*}
    H_{Q,m;\beta}(\lambda) = \beta C_m  Q^2 \ \frac{\varphi(m)}{m} \sum\limits_{1 \leq \Delta \leq \lambda} \frac{\varphi(\Delta)}{\Delta}
    \cdot\frac{ (\Delta,m)}{ \varphi\big((\Delta,m)\big)}
    \bigg( 1-\frac{\Delta}{\lambda} - \frac{\Delta}{\lambda} \log \frac{\lambda}{\Delta} \bigg) + \ O_{\delta,m, K}(Q^{3/2+\delta}).
\end{equation*}
This leads in turn to
\begin{equation}\label{eq2.3}
    G_{(m;\alpha, \beta)}(\lambda)=   2 \, \frac{\varphi(m)}{m}
 \sum\limits_{1\leq \Delta \leq \frac{2\lambda}{C_m}} \frac{\varphi(\Delta)}{\Delta}  \bigg( 1-\frac{\Delta  C_m}{2\lambda} - \frac{\Delta C_m}{2\lambda}
 \log \frac{2\lambda}{\Delta C_m} \bigg).
 \end{equation}
Finally, Theorem \ref{thm1} follows by differentiating in \eqref{eq2.3}.

For $m=1$, $\alpha=0$, $\beta=1$, we retrieve Theorem 1 in \cite{BZ1}.
For $m=2$ we get
\begin{equation*}
    g_{(2)}(\lambda) = \frac{1}{3\zeta(2)\lambda^2} \sum\limits_{1\leq \Delta \leq 3\zeta(2)\lambda} \varphi(\Delta) (\Delta,2) \log \frac{ 3\zeta(2)\lambda}{\Delta}.
\end{equation*}

In the remaining part of this section we prove equality \eqref{eq1.3}. Consider the Dirichlet series
$$D_m (s):=\sum\limits_{\Delta=1}^\infty \frac{K_m(\Delta)}{\Delta^{s-1}} =\prod\limits_p \bigg( 1+\frac{K_m(p)}{p^{s-1}} +
\frac{K_m(p^2)}{p^{2s-2}} +\cdots \bigg) \quad \text{if } \operatorname{Re} s >2.
$$
Take $\displaystyle \zeta_m (s):=\prod\limits_{p\nmid m} \bigg( 1-\frac{1}{p^s}\bigg)^{-1}$, $\operatorname{Re} s >1$. We have
$$
\frac{\zeta_m(s-1)}{\zeta_m(s)} =\prod\limits_{p\nmid m} \frac{1-\frac{1}{p^s}}{1-\frac{1}{p^{s-1}}} =\prod\limits_{p\nmid m} \frac{p^s-1}{p^s-p}
$$
and
$$
\sum\limits_{\ell=0}^\infty \frac{K_m (p^\ell)}{p^{\ell(s-1)}}
=\begin{cases}
1+\big( 1-\frac{1}{p}\big)\frac{1}{p^{s-1}} \cdot \frac{1}{1-\frac{1}{p^{s-1}}} =1+\frac{p-1}{p^s-p} =\frac{p^s-1}{p^s-p}
& \text{if } p\nmid m \\
\big( 1-\frac{1}{p^{s-1}}\big)^{-1} & \text{if } p\mid m
\end{cases} ,
$$
leading to
$$
\begin{aligned}
D_m (s)& = \frac{\zeta_m (s-1)}{\zeta_m(s)} \prod\limits_{p\mid m} \bigg( 1-\frac{1}{p^{s-1}}\bigg)^{-1} =\frac{\zeta (s-1)}{\zeta(s)}
\prod\limits_{p\nmid m} \frac{p^s-p}{p^s-1} \prod\limits_{p\mid m} \frac{p^{s-1}}{p^{s-1}-1} \\
 & =\frac{\zeta(s-1)}{\zeta(s)} \, c_m(s),\quad \text{where } c_m(s):=\prod\limits_{p\mid m} \frac{1}{1-\frac{1}{p^s}} .
\end{aligned}
$$
Next we follow closely the final part of \cite{BZ1}. By Perron's formula \cite[(4.14)]{BZ1} we infer
$$
\begin{aligned}
\sum\limits_{1\leq \Delta \leq x} \Delta K_m (\Delta) \log  \frac{x}{\Delta} & = \frac{1}{2\pi i}
\int\limits_{\sigma_0-i\infty}^{\sigma_0+i\infty} D_m (s) \, \frac{x^s}{s^2}\, ds \\ & =
\frac{1}{2\pi i} \int\limits_{\sigma_0-i\infty}^{\sigma_0+i\infty} \frac{\zeta (s-1)}{\zeta(s)} \, c_m(s) \, \frac{x^s}{s^2}\, ds
\qquad (\sigma_0 >2).
\end{aligned}
$$
Moving the contour at $\operatorname{Re} s =1$ and employing the notation from \cite{BZ1} we get
$$
\begin{aligned}
\frac{\zeta (it)}{\zeta (1+it)} \prod\limits_{p\mid m} \bigg( 1-\frac{1}{p^{it}} \bigg) & =
\chi(it) \, \frac{\zeta (1-it)}{\zeta (1+it)} \prod\limits_{p\mid m} \bigg( 1-\frac{1}{p^{it}} \bigg) ,\\
 \zeta (1-it) & =\overline{\zeta (1+it)} ,   \\
\underset{s=2}{\operatorname{Res}} \,\frac{\zeta (s-1)}{\zeta (s)} \, c_m(s)\, \frac{x^s}{s^2} & =\lim\limits_{s\rightarrow 2}
\frac{(s-2)\zeta (s-1)}{\zeta (s)} \, c_m(s)\, \frac{x^s}{4} \\
& =\frac{c_m(2)}{\zeta (2)} \cdot \frac{x^2}{4} \quad
\text{($s=2$ is a simple pole)}.
\end{aligned}
$$
Estimating the error as in \cite{BZ1} we find
$$
\begin{aligned}
\sum\limits_{1\leq \Delta \leq x} \Delta K_m(\Delta) \log  \frac{x}{\Delta}&  =
\underset{s=2}{\operatorname{Res}} \,\frac{\zeta (s-1)}{\zeta (s)} \, c_m(s)\, \frac{x^s}{s^2} +O_m(x) \\ &
=\frac{c_m(2)}{\zeta (2)} \cdot \frac{x^2}{4} +O_m(x).
\end{aligned}
$$
Setting $\mu:=\frac{2\lambda}{C_m}$ we get $\lambda=\frac{C_m\mu}{2}$ and (as $\lambda \rightarrow\infty$)
$$
\begin{aligned}
g_{(m)} (\lambda) & =g_{(m)} (\tfrac{1}{2} C_m \mu) =\frac{\varphi(m)}{m} \, C_m \, \frac{4}{C_m^2\mu^2} \sum\limits_{1\leq \Delta \leq \mu}
\Delta K_m(\Delta) \log  \frac{\mu}{\Delta}  \\
& = \frac{4\varphi(m)}{mC_m \mu^2} \left( \prod\limits_{p\mid m} \bigg( 1-\frac{1}{p^2}\bigg)^{-1} \frac{\mu^2}{4\zeta(2)} +O_m (\mu)\right) \\ & =
\frac{1}{\zeta(2)} \prod\limits_{p\nmid m} \bigg( 1-\frac{1}{p^2}\bigg)^{-1} \prod\limits_{p\mid m} \bigg( 1-\frac{1}{p^2}\bigg)^{-1} +O_m(\mu^{-1})\\
&  =1+O_m (\lambda^{-1}).
\end{aligned}
$$

\section{The pair correlation of $\FF_Q^{(m,b)}$}
We will employ the following estimate (\cite[Lemma 3.3]{B}):

\begin{lemma}\label{L1}
Assuming $(b,m)=1$ and $V\in C^1 [0,Q]$, we have
$$
\sum\limits_{\substack{q=1 \\ q\equiv b\hspace{-6pt}\pmod{m}}}^Q
\frac{\varphi(q)}{q}\, V(q) =\frac{C_m}{\varphi(m)} \int_0^Q V+O\Big( (\| V\|_\infty +T_0^N V)\log Q\Big).
$$
\end{lemma}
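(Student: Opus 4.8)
The plan is to remove the weight $\varphi(q)/q$ by Möbius inversion and reduce the problem to elementary Riemann-type sums over arithmetic progressions. Writing $\varphi(q)/q=\sum_{d\mid q}\mu(d)/d$ and interchanging the order of summation (note that $q\le Q$ forces $d\le Q$ automatically), we get
$$
\sum_{\substack{q=1\\ q\equiv b\,(\mathrm{mod}\,m)}}^{Q}\frac{\varphi(q)}{q}\,V(q)
=\sum_{1\le d\le Q}\frac{\mu(d)}{d}\sum_{\substack{q\le Q,\ d\mid q\\ q\equiv b\,(\mathrm{mod}\,m)}}V(q).
$$
The first point to observe is that the inner sum is empty unless the two congruences $q\equiv 0\ (\mathrm{mod}\ d)$ and $q\equiv b\ (\mathrm{mod}\ m)$ are compatible, which by the Chinese Remainder Theorem happens precisely when $(d,m)\mid b$; since $(b,m)=1$ this forces $(d,m)=1$, and then the two congruences determine a single residue $q_0=q_0(d,b,m)$ modulo $dm$ with $d\le q_0\le dm$, so the inner sum equals $\sum_{0\le n\le (Q-q_0)/(dm)}V(q_0+ndm)$.

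Next I would compare this progression sum with an integral. Setting $h:=dm$, integration by parts on each full block $[a,a+h]\subseteq[0,Q]$ gives $hV(a)=\int_a^{a+h}V(x)\,dx+\int_a^{a+h}(x-a-h)V'(x)\,dx$, and since $|x-a-h|\le h$ on that interval, summing over the consecutive blocks and absorbing the two incomplete end pieces (each of length $<h$) yields
$$
\sum_{\substack{q\le Q,\ d\mid q\\ q\equiv b\,(\mathrm{mod}\,m)}}V(q)=\frac{1}{dm}\int_0^Q V\;+\;O\!\big(\|V\|_\infty+T_0^N V\big),
$$
uniformly in $d$. The large-$d$ range $dm\gg Q$ is harmless: there the progression has $O(1)$ terms and $\tfrac{1}{dm}\int_0^Q V\ll\|V\|_\infty$ as well, so both sides are $O(\|V\|_\infty)$.

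Substituting this back, the main term is $\frac1m\big(\int_0^Q V\big)\sum_{1\le d\le Q,\ (d,m)=1}\mu(d)/d^2$ and the accumulated error is $\ll(\|V\|_\infty+T_0^N V)\sum_{d\le Q}1/d\ll(\|V\|_\infty+T_0^N V)\log Q$. Completing the $d$-series to infinity costs only $O(\|V\|_\infty)$, since its tail is $O(1/Q)$ while $\int_0^Q V\ll Q\|V\|_\infty$, and
$$
\sum_{(d,m)=1}\frac{\mu(d)}{d^2}=\prod_{p\nmid m}\Big(1-\tfrac1{p^2}\Big)=\frac1{\zeta(2)}\prod_{p\mid m}\Big(1-\tfrac1{p^2}\Big)^{-1},
$$
so that $\frac1m$ times this equals $C_m/\varphi(m)$ by the very definition of $C_m$; this is exactly the claimed asymptotic. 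The argument is mostly bookkeeping, and I expect the only delicate point to be keeping the integral-approximation error uniform in $d$ over the whole range $1\le d\le Q$ so that summation against $\sum 1/d$ produces precisely a factor $\log Q$ and nothing larger; conceptually, the step that pins down the main constant is the CRT remark forcing $(d,m)=1$, which is exactly where the coprimality hypothesis $(b,m)=1$ enters.
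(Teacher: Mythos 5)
The paper never proves this lemma: it is imported verbatim from \cite[Lemma 3.3]{B}, so there is no internal argument to compare yours with; judged on its own, your proof is correct. The chain of steps checks out: $\varphi(q)/q=\sum_{d\mid q}\mu(d)/d$ and the interchange of summation; the observation that compatibility of $q\equiv 0\pmod d$ with $q\equiv b\pmod m$ forces $(d,m)=1$ (exactly where $(b,m)=1$ is used) and pins down a single residue class modulo $dm$; the block identity $hV(a)=\int_a^{a+h}V+\int_a^{a+h}(x-a-h)V'(x)\,dx$, which gives the progression-versus-integral comparison with error $O(\|V\|_\infty+T_0^Q V)$ uniformly in $1\le d\le Q$ (including the degenerate range $dm\gg Q$, which you treat correctly); the bound $\sum_{d\le Q}d^{-1}\cdot O(\|V\|_\infty+T_0^Q V)\ll(\|V\|_\infty+T_0^Q V)\log Q$; the completion of $\sum_{(d,m)=1}\mu(d)d^{-2}$ to $\prod_{p\nmid m}(1-p^{-2})$ at a cost of $O(\|V\|_\infty)$; and the identification $\frac1m\prod_{p\nmid m}(1-p^{-2})=C_m/\varphi(m)$. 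Two small remarks: your reading of the (otherwise undefined) symbol $T_0^N V$ as the total variation of $V$ on $[0,Q]$, i.e.\ $\int_0^Q|V'|$ for $V\in C^1$, is the intended one from \cite{B} and is exactly what your variation bound produces; and the more common route to such statements (and the natural reading of \cite{B}) is to first establish the unweighted asymptotic $\sum_{q\le x,\ q\equiv b\pmod m}\varphi(q)/q=\frac{C_m}{\varphi(m)}\,x+O(\log x)$ and then apply Abel summation against $V$, which is where the $\|V\|_\infty$ and total-variation terms usually come from. Your argument fuses those two steps by comparing each arithmetic progression directly with $\frac{1}{dm}\int_0^Q V$; this is slightly more elementary, keeps the $d$-uniformity (the one delicate point, which you flag) explicit, and yields the same $\log Q$ error.
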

In particular this gives \eqref{eq1.4}.

Given $\lambda >0$, we are interested in estimating the following three quantities as $Q\rightarrow \infty$:
$$
\begin{aligned}
S_{Q;m,b,\Delta} (\lambda) & :=\# \left\{  (\gamma,\gamma^\prime) : \begin{matrix} \gamma,\gamma^\prime \in \FF_Q^{(m,b)}, \gamma^\prime -\gamma \leq \frac{\lambda}{Q^2} \\
\gamma =\frac{a}{q} < \gamma^\prime =\frac{a^\prime}{q^\prime} ,\
a^\prime q-aq^\prime =\Delta \end{matrix} \right\}, \\
H_{Q;m,b}(\lambda ) & :=\# \bigg\{ (\gamma,\gamma^\prime) : \gamma,\gamma^\prime \in \FF_Q^{(m,b)}, 0< \gamma^\prime -\gamma \leq \frac{\lambda}{Q^2}\bigg\}
=\sum\limits_{1\leq \Delta \leq \lambda} S_{Q;m,b,\Delta}(\lambda)  ,\\
G_{Q;m,b}(\lambda) & := \frac{1}{N_{Q,(m,b)}} \,\# \bigg\{ (\gamma,\gamma^\prime) : \gamma,\gamma^\prime \in \FF_Q^{(m,b)}, 0< \gamma^\prime -\gamma \leq \frac{\lambda}{N_{Q,(m,b)}} \bigg\} .
\end{aligned}
$$
As in the previous section we can write
\begin{equation}\label{eq31}
S_{Q;m,b,\Delta} (\lambda) =\sum\limits_{d\mid \Delta} \mu (d)
\sum\limits_{\substack{\frac{\Delta Q}{\lambda} \leq v\leq Q \\
v\equiv b \hspace{-6pt} \pmod{m}}}
\sum\limits_{\substack{\frac{\Delta Q^2}{\lambda v} \leq u \leq Q \\
u\equiv b \hspace{-6pt} \pmod{m},\, d\mid u \\
x\leq v,\ (x,v)=1 \\
xu \equiv \Delta \hspace{-6pt} \pmod{v}}} 1.
\end{equation}
We write $u=dw$ and observe that the assumption $(b,m)=1$ implies $(d,m)=(v,m)=1$.
Hence we get
\begin{equation}\label{eq32}
S_{Q;m,b,\Delta} (\lambda) =\sum\limits_{\substack{d\mid \Delta \\ (d,m)=1}} \mu (d)
\sum\limits_{\substack{\frac{\Delta Q}{\lambda} \leq v\leq Q \\
v\equiv b \hspace{-6pt} \pmod{m}}} T_{Q;m,b,\Delta}(v,\lambda) ,
\end{equation}
where
\begin{equation}\label{eq33}
T_{Q;m,b,\Delta}(v,\lambda) :=
\sum\limits_{\substack{\frac{\Delta Q^2}{\lambda dv} \leq w \leq \frac{Q}{d} \\
dw \equiv b \hspace{-0.6pt} \pmod{m} \\
x\leq v, \ (x,v)=1 \\
xw \equiv \frac{\Delta}{d}\hspace{-6pt} \pmod{v}}} 1 .
\end{equation}

We write
\begin{equation}\label{eq34}
\begin{aligned}
T_{Q;m,b,\Delta}(v,\lambda) & =
\sum\limits_{\substack{x\in [0,v],\ (x,v)=1 \\ y\in \big[\frac{\Delta Q^2}{\lambda dv} ,\frac{Q}{d}\big] ,\ y\equiv \overline{\overline{d}} b  \hspace{-6pt} \pmod{m}}}
\frac{1}{v} \sum\limits_{k\hspace{-6pt} \pmod{v}} e\bigg( \frac{k(y-\frac{\Delta}{d} \, \overline{x})}{v}\bigg) \\ & =
M_{Q;m,b,\Delta}(v,\lambda) + E_{Q;m,b,\Delta}(v,\lambda),
\end{aligned}
\end{equation}
where $\overline{\overline{d}}$ is the multiplicative inverse of $d\hspace{-4pt} \mod{m}$ and $\overline{x}$ the multiplicative inverse of $x\hspace{-4pt} \mod{v}$,
\begin{equation}\label{eq35}
\begin{aligned}
M_{Q;m,b,\Delta}(v,\lambda) & := \frac{1}{v} \sum\limits_{\substack{x\in [0,v] \\ (x,v)=1}} 1
\sum\limits_{\substack{y\in \big[\frac{\Delta Q^2}{\lambda dv} ,\frac{Q}{d}\big] \\ y\equiv \overline{\overline{d}} b  \hspace{-6pt} \pmod{m}}} 1
\\ & =\frac{1}{v} \bigg( \frac{\varphi(v)}{v} \, v +O_\delta (v^\delta)\bigg) \bigg( \frac{1}{m} \Big( \frac{Q}{d}-\frac{\Delta Q^2}{\lambda dv}\Big) +O(1)\bigg),
\end{aligned}
\end{equation}
\begin{equation}\label{eq36}
E_{Q;m,b,\Delta}(v,\lambda)  := \frac{1}{v} \sum\limits_{k=1}^{v-1} S_{[1,v]} \bigg( 0,-\frac{\Delta}{v} \, k;v\bigg)
\sum\limits_{\substack{y\in \big[\frac{\Delta Q^2}{\lambda dv} ,\frac{Q}{d}\big] \\ y\equiv \overline{\overline{d}} b  \hspace{-6pt} \pmod{m}}}
e \bigg( \frac{ky}{v}\bigg)  .
\end{equation}
We employ the bound\footnote{Here $S_{[1,v]}(0,\ell;v)$ coincides with the Ramanujan sum
$c_v(\ell) \ll_\delta (\ell,v)^{1+\delta}$. On replacing
$v$ by $\beta v$ with $\beta \in (0,1)$ the inequality \eqref{eq37} follows from \eqref{eq2.2}.}
\begin{equation}\label{eq37}
S_{[1,v]} \bigg( 0, -\frac{\Delta}{d}\, k ;v \bigg) \ll_\delta \bigg( \frac{\Delta}{d}\, k,v\bigg)^{1/2} v^{1/2+\delta}
\leq \frac{\Delta}{d} \, (k,v)^{1/2} v^{1/2+\delta},
\end{equation}
$(v,m)=1$, and the geometric series with ratio $e(\frac{km}{v})$ to gather
$$
E_{Q;m,b,\Delta}(v,\lambda) \ll_{\delta,\Delta} v^{-1/2+\delta} \sum\limits_{k=1}^{v-1} (k,v)^{1/2}
\Bigg| \sum\limits_{\substack{y\in \big[\frac{\Delta Q^2}{\lambda dv} ,\frac{Q}{d}\big] \\ y\equiv \overline{\overline{d}} b  \hspace{-6pt} \pmod{m}}}
e \bigg( \frac{ky}{v}\bigg) \Bigg| \ll v^{-1/2+\delta} \sum\limits_{k=1}^{v-1} \frac{(k,v)^{1/2}}{\big\| \frac{km}{v}\big\|} .
$$
Using $\{ km : 1\leq k\leq v-1\} \subseteq \{ \ell : 1\leq \ell \leq mv ,v\nmid \ell\}$ and
$(k,v)^{1/2} \leq (km,v)^{1/2}$
this yields
\begin{equation}\label{eq38}
\begin{aligned}
E_{Q;m,b,\Delta}(v,\lambda) & \ll_{\delta,\Delta} v^{-1/2+\delta} \sum\limits_{\substack{\ell=1 \\ v\nmid \ell}}^{mv}
\frac{(\ell,v)^{1/2}}{\big\| \frac{\ell}{v}\big\|} = mv^{-1/2+\delta}  \sum\limits_{\ell=1}^{v-1} \frac{(\ell,v)^{1/2}}{\big\| \frac{\ell}{v}\big\|}
\\ & \leq 2 m v^{-1/2+\delta}  \sum\limits_{1\leq \ell <\frac{v}{2}} \frac{(\ell,v)^{1/2}}{ \frac{\ell}{v}}
\ll_{\delta,\Delta,m} v^{1/2+3\delta} .
\end{aligned}
\end{equation}
From \eqref{eq34}, \eqref{eq35} and \eqref{eq38} we now infer
$$
S_{Q;m,b,\Delta} (\lambda) =\frac{Q}{m} \sum\limits_{\substack{d\mid \Delta \\ (d,m)=1}} \frac{\mu (d)}{d}
\Bigg( \sum\limits_{\substack{\frac{\Delta Q}{\lambda} \leq v\leq Q \\ v \equiv b\hspace{-6pt} \pmod{m}}} \frac{\varphi (v)}{v}
-\frac{\Delta Q}{\lambda} \sum\limits_{\substack{\frac{\Delta Q}{\lambda} \leq v\leq Q \\ v \equiv b\hspace{-6pt} \pmod{m}}} \frac{\varphi (v)}{v^2} \Bigg)
+O_{\delta,m,\Delta} (Q^{3/2+\delta}) .
$$

Next, an application of Lemma \ref{L1} and the equality
$$
\sum\limits_{\substack{d\mid \Delta \\ (d,m)=1}} \frac{\mu(d)}{d}=\frac{\varphi(\Delta)}{\Delta}
\cdot \frac{(\Delta,m)}{\varphi((\Delta,m))}$$
lead to
\begin{equation}\label{eq39}
S_{Q;m,b,\Delta}(\lambda)  =\frac{C_m Q^2}{m \varphi (m)} \cdot \frac{\varphi(\Delta)}{\Delta} \cdot \frac{(\Delta,m)}{\varphi\big((\Delta,m)\big)}
\bigg( 1-\frac{\Delta}{\lambda}-\frac{\Delta}{\lambda} \, \log \frac{\lambda}{\Delta} \bigg)
 +O_{\delta,m,\Delta} (Q^{3/2+\delta}) ,
\end{equation}
and so we get, for every $K\geq 1$ and uniformly in $\lambda \in [1,K]$,
\begin{equation}\label{eq40}
H_{Q;m,b}(\lambda)  =\frac{C_m Q^2}{m\varphi(m)} \sum\limits_{1\leq \Delta <\lambda} \hspace{-3pt} \varphi(\Delta) \,\frac{(\Delta,m)}{\varphi \big((\Delta,m)\big)}
\bigg( \frac{1}{\Delta}-\frac{1}{\lambda}-\frac{1}{\lambda}\, \log  \frac{\lambda}{\Delta} \bigg)   +O_{\delta,m,K} (Q^{3/2+\delta}).
\end{equation}
Estimates \eqref{eq40}, \eqref{eq1.4}, and the definitions of $G_{Q;m,b}$ and $H_{Q;m,b}$ now yield
\begin{equation}\label{eq41}
G_{Q;m,b}(\lambda) =\frac{1}{N_{Q,(m,b)}}\, H_{Q;m,b} \bigg( \frac{Q^2}{N_{Q;(m,b)}}\, \lambda \bigg)
=G_{(m)}(\lambda)+O_{\delta,m} (Q^{-1/2+\delta}),
\end{equation}
where
$$
G_{(m)} (\lambda) =\frac{2}{m} \sum\limits_{1\leq \Delta \leq K_m \lambda} \varphi(\Delta)\, \frac{(\Delta,m)}{\varphi \big((\Delta,m)\big)}
 \bigg( \frac{1}{\Delta} -\frac{1}{K_m \lambda} -\frac{1}{K_m \lambda}\, \log  \frac{K_m \lambda}{\Delta}\bigg),
$$
with $K_m:=\frac{2\varphi(m)}{C_m}$.

This shows that the pair correlation function of $\FF_Q^{(m,b)}$ exists and is given by
\begin{equation}\label{eq42}
\widetilde{g}_{(m)} (\lambda)  =G_{(m)}^\prime (\lambda)  =\frac{C_m}{m\varphi (m)} \cdot \frac{1}{\lambda^2}
 \sum\limits_{1\leq \Delta \leq K_m\lambda} \varphi(\Delta)\,  \frac{(\Delta,m)}{\varphi \big((\Delta,m)\big)}
 \, \log \frac{K_m \lambda}{\Delta} .
 \end{equation}
Comparing \eqref{eq42} with the formula for $g_{(m)}$ given in Theorem~\ref{thm1} we find that
$$
\widetilde{g}_{(m)} (\lambda)=g_{(m)} ( \varphi (m)\lambda) .
$$

\end{document}